\documentclass[11pt]{article}
\usepackage[margin=1.1in,a4paper]{geometry}
\usepackage[dvipsnames]{xcolor}
\usepackage[authoryear,longnamesfirst]{natbib}
\usepackage{amsmath,amssymb,mathtools,amsthm}
\usepackage{amsfonts,bm}
\usepackage{url}
\usepackage[colorlinks=true,citecolor=blue,linkcolor=blue,urlcolor=blue]{hyperref}

\newtheorem{theorem}{Theorem}
\newtheorem{corollary}[theorem]{Corollary}
\newtheorem{lemma}[theorem]{Lemma}
\newtheorem{remark}{Remark}

\newcommand{\R}{\mathbb{R}}
\newcommand{\N}{\mathbb{N}}
\newcommand{\E}{\mathbb{E}}
\newcommand{\dd}{\,\mathrm{d}}
\newcommand{\abs}[1]{\left\lvert #1\right\rvert}

\title{Absolute moment inequalities under quadratic-form positivity}
\author{Zhekai Pang\thanks{Universitat Pompeu Fabra, Barcelona, Spain. Email: \texttt{zhekai.pang@upf.edu}.}}
\date{}

\begin{document}

\maketitle

\begin{abstract}
We prove the open question posed by Zhuang and Hu in Remark 3.1. More generally, we consider symmetric joint probability mass functions and joint densities whose associated quadratic form is non-negative. In this class, for every \(r>0\), the inequality \(\E\abs{X+Y}^{r}\ge \E\abs{X-Y}^{r}\) holds for all distributions with finite \(r\)-th absolute moment if and only if \(0<r\le2\).
\end{abstract}

\medskip
\noindent\textbf{Keywords:} Absolute moment; Moment inequality; Quadratic-form positivity.

\section{Introduction}

Let \(X\) and \(Y\) be independent and identically distributed real random variables. The characteristic-function method behind many inequalities of this type goes back to \citet{BahrEsseen1965}, who used the integral representation of \(\abs{x}^r\) for \(0<r<2\) in the study of absolute moments of sums. \citet{An2006} and \citet{Ushakov2011} proved that
\[
   \E\abs{X+Y}^{r}\ge \E\abs{X-Y}^{r},\qquad 0<r<2,
\]
whenever \(\E\abs{X}^{r}<\infty\).

\citet{ZhuangHu2026} considered the following two classes of symmetric bivariate distributions. In the discrete case,
\begin{equation}\label{eq:cauchy-discrete}
   \mathbb P(X=a_i,Y=a_j)=\frac{d_i d_j}{c_i+c_j},
   \qquad i,j\in I,
\end{equation}
where \(I=\{1,\ldots,n\}\) for some \(n\in\N^+\), or \(I=\N\), and \(a_i\in\R\), \(c_i,d_i>0\). In the continuous case,
\begin{equation}\label{eq:cauchy-density}
   f(x,y)=\frac{d(x)d(y)}{c(x)+c(y)},
   \qquad (x,y)\in D^2,
\end{equation}
where \(D\subseteq\R\) is Borel and \(c(x),d(x)>0\). In both displays the right-hand side is assumed to define a valid joint distribution. Zhuang and Hu proved the case \(r=1\). They then asked in their Remark 3.1 whether the same inequality remains true when \((X,Y)\) has joint probability mass function \eqref{eq:cauchy-discrete} or joint density function \eqref{eq:cauchy-density}.

We prove this open question and, more generally, extend the inequality from the Cauchy kernel to a larger class defined by a quadratic-form positivity condition.

\section{Model}

Let \(\mathcal B(\R)\) denote the Borel \(\sigma\)-field on \(\R\).

First consider the discrete case. Let \(I=\{1,\ldots,n\}\) for some \(n\in\N^+\), or let \(I=\N\). Let \((a_i)_{i\in I}\) be support points in \(\R\), listed without repetition. Let \((p_{ij})_{i,j\in I}\) be a symmetric array such that \(p_{ij}\ge0\) and
\[
   \sum_{i\in I}\sum_{j\in I}p_{ij}=1.
\]
Define
\begin{equation}\label{eq:discrete-model}
   \mathbb P(X=a_i,Y=a_j)=p_{ij},
   \qquad i,j\in I.
\end{equation}
Assume that \((p_{ij})\) is non-negative in the quadratic-form sense:
\begin{equation}\label{eq:discrete-positive}
   \sum_{i\in F}\sum_{j\in F}u_i u_j p_{ij}\ge0
\end{equation}
for every finite set \(F\subseteq I\) and every real family \((u_i)_{i\in F}\). In the finite case, this is the usual positive semi-definiteness of the matrix \((p_{ij})\).

Next consider the density case. Let \(D\in\mathcal B(\R)\), and let \(f:D^2\to[0,\infty)\) be measurable and symmetric. Assume
\begin{equation}\label{eq:density-normalization}
   \iint_{D^2}f(x,y)\dd x\dd y=1.
\end{equation}
Thus \(f\) is a joint density function on \(D^2\). Assume also that \(f\) is non-negative in the integral quadratic-form sense: for every measurable \(\delta:D\to\R\),
\[
   \iint_{D^2}\abs{\delta(x)\delta(y)}f(x,y)\dd x\dd y<\infty
\]
implies
\begin{equation}\label{eq:density-positive}
   \iint_{D^2}\delta(x)\delta(y)f(x,y)\dd x\dd y\ge0.
\end{equation}
Equivalently, the integral operator induced by \(f\) has a non-negative quadratic form on its natural domain of functions for which the displayed integral is absolutely convergent.
Let \((X,Y)\) have joint density \(f\), that is,
\begin{equation}\label{eq:density-model}
   \mathbb P((X,Y)\in B)=\iint_{B}f(x,y)\dd x\dd y,
   \qquad B\in\mathcal B(D^2).
\end{equation}

\begin{remark}\label{rem:terminology}
The phrase ``positive semi-definite'' is used here for the quadratic form generated by the joint mass function or density function. This is slightly different from the usual pointwise positive semi-definite kernel condition in functional analysis; see, for example, \citet[Chapter 3]{BergChristensenRessel1984} for the standard kernel terminology. In the discrete case, it reduces to the usual positive semi-definiteness of the matrix \((p_{ij})\). In the density case, it is the integral analogue in \eqref{eq:density-positive}.
\end{remark}

\section{Lemmas}

\begin{lemma}\label{lem:chung}
Let \(0<r<2\). Let
\[
   C_r=\left(\int_{-\infty}^{\infty}\frac{1-\cos u}{\abs{u}^{r+1}}\dd u\right)^{-1}
   =\frac{\Gamma(r+1)}{\pi}\sin\frac{\pi r}{2}.
\]
Then, for every \(z\in\R\),
\[
   \abs{z}^{r}=C_r\int_{-\infty}^{\infty}\frac{1-\cos(tz)}{\abs{t}^{r+1}}\dd t.
\]
\end{lemma}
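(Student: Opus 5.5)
The plan is to establish the identity first for \(z\neq0\) by a change of variables, and then to evaluate the constant separately. For \(z=0\) both sides vanish, since \(1-\cos 0=0\). So fix \(z\neq0\). Before anything else, I would check that
\[
   K_r:=\int_{-\infty}^{\infty}\frac{1-\cos u}{\abs{u}^{r+1}}\dd u
\]
is finite and positive, so that \(C_r=K_r^{-1}\) is well defined. The integrand is non-negative and continuous away from \(0\). Near \(u=0\) we have \(1-\cos u\le u^2/2\), so the integrand is bounded by \(\abs{u}^{1-r}/2\), which is integrable because \(r<2\). For large \(\abs{u}\) the integrand is bounded by \(2\abs{u}^{-r-1}\), which is integrable because \(r>0\). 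Positivity holds because the integrand is strictly positive on a set of positive measure.

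For the identity itself, substitute \(u=t\abs{z}\) in \(\int(1-\cos(tz))\abs{t}^{-r-1}\dd t\). Since cosine is even, \(\cos(tz)=\cos(t\abs{z})\), and the Jacobian is \(\dd t=\dd u/\abs{z}\). Hence
\[
   \int_{-\infty}^{\infty}\frac{1-\cos(tz)}{\abs{t}^{r+1}}\dd t=\abs{z}^{r+1}\cdot\abs{z}^{-1}K_r=\abs{z}^{r}K_r .
\]
Multiplying by \(C_r\) gives \(\abs{z}^r\).

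The remaining step, and the only real computation, is the closed form \(K_r^{-1}=\Gamma(r+1)\sin(\pi r/2)/\pi\). By symmetry, \(K_r=2\int_0^\infty(1-\cos u)u^{-r-1}\dd u\).

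For \(0<r<1\), integrate by parts once. The boundary terms vanish by the bounds above, which leaves
\[
   \frac{2}{r}\int_0^\infty \sin u\, u^{-r}\dd u=\frac{2}{r}\,\Gamma(1-r)\sin\frac{\pi(1-r)}{2}.
\]
Here I use the standard Mellin formula \(\int_0^\infty u^{s-1}\sin u\dd u=\Gamma(s)\sin(\pi s/2)\) for \(-1<s<1\), with \(s=1-r\). This simplifies to \(K_r=\frac{2}{r}\Gamma(1-r)\cos\frac{\pi r}{2}\).

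For \(1\le r<2\), I would instead apply the same Mellin formula with \(s=1-r\in(-1,0]\), interpreting it by analytic continuation, or alternatively integrate by parts twice and use the cosine Mellin formula. Either route gives the same expression \(\frac{2}{r}\Gamma(1-r)\cos\frac{\pi r}{2}\), read as a limit at \(r=1\), where it equals \(\pi\).

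Finally, apply the reflection formula \(\Gamma(1-r)\Gamma(r)=\pi/\sin(\pi r)\) together with \(r\Gamma(r)=\Gamma(r+1)\) and \(\sin(\pi r)=2\sin\frac{\pi r}{2}\cos\frac{\pi r}{2}\). This converts the expression to \(K_r=\pi/\bigl(\Gamma(r+1)\sin\frac{\pi r}{2}\bigr)\), as claimed.

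The main obstacle is handling the whole range \(0<r<2\) uniformly, namely the case \(r\ge1\) and the removable singularity at \(r=1\). I would settle this by noting that both sides are analytic in \(r\) on \((0,2)\) (the integral by dominated differentiation) and agree on \((0,1)\), so they agree on the whole interval.
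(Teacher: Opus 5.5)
Your proof is correct. It is also more than the paper gives: the paper offers no argument for Lemma~\ref{lem:chung} and only cites \citet[Section 3]{BahrEsseen1965} and \citet[Section 6.2, Exercise 8]{Chung2001}.

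The whole content of the identity is your substitution $u=t\abs{z}$ together with $0<K_r<\infty$. That is also all Theorem~\ref{thm:main} uses. There, only the identity and the finiteness and positivity of $C_r$ matter: they justify $0\le\Phi_n\le\abs{z}^r$ and the sign of the truncated integrals. The closed form of $C_r$ is never used, so your evaluation of the constant is a bonus.

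That evaluation can be simplified. Your single integration by parts is valid verbatim for every $0<r<2$, not just $0<r<1$. On $[\varepsilon,M]$ the boundary term $(1-\cos u)u^{-r}/r$ is at most $\varepsilon^{2-r}/(2r)\to0$ at the lower end because $r<2$, and at most $2M^{-r}/r\to0$ at the upper end because $r>0$. The sine Mellin formula $\int_0^\infty u^{s-1}\sin u\dd u=\Gamma(s)\sin(\pi s/2)$ holds for all $0<\abs{s}<1$, including negative $s$. So $K_r=\frac{2}{r}\Gamma(1-r)\cos\frac{\pi r}{2}$ holds directly on $(0,1)\cup(1,2)$. The value at $r=1$ comes from the Dirichlet integral, $K_1=2\int_0^\infty u^{-1}\sin u\dd u=\pi$. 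No case split and no analytic continuation are needed.

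If you keep the continuation argument, note one slip. Dominated differentiation in real $r$ only gives smoothness, which is not enough for the identity theorem. You need the same domination for complex $r$ on compact subsets of the strip $0<\operatorname{Re} r<2$, which gives holomorphy of $K_r$ there. The right-hand side $\pi/(\Gamma(r+1)\sin(\pi r/2))$ is holomorphic on that strip, since $\sin(\pi r/2)$ vanishes only at even integers.
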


\begin{proof}
See \citet[Section 3]{BahrEsseen1965} or \citet[Section 6.2, Exercise 8]{Chung2001}.
\end{proof}

\begin{lemma}\label{lem:cauchy-positive}
Let \(D\in\mathcal B(\R)\), let \(\mu\) be a measure on \(\mathcal B(D)\), let \(c:D\to(0,\infty)\) be measurable, and let \(\eta:D\to\R\) be measurable. If
\[
   \iint_{D^2}\frac{\abs{\eta(x)\eta(y)}}{c(x)+c(y)}\mu(\dd x)\mu(\dd y)<\infty,
\]
then
\[
   \iint_{D^2}\frac{\eta(x)\eta(y)}{c(x)+c(y)}\mu(\dd x)\mu(\dd y)\ge0.
\]
\end{lemma}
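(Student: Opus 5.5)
The plan is to use the Laplace-type representation
\[
   \frac{1}{c(x)+c(y)}=\int_0^\infty e^{-s c(x)}e^{-s c(y)}\dd s,
\]
which holds since \(c(x)+c(y)>0\). Substituting it into the double integral and interchanging the order of integration would give
\[
   \iint_{D^2}\frac{\eta(x)\eta(y)}{c(x)+c(y)}\mu(\dd x)\mu(\dd y)
   =\int_0^\infty\left(\int_D \eta(x)e^{-s c(x)}\mu(\dd x)\right)^2\dd s\ \ge 0 .
\]
The proof therefore reduces to justifying the interchange and showing that the inner integral is well defined for almost every \(s\).

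For the interchange I would apply Fubini--Tonelli on \(D\times D\times(0,\infty)\) with the product measure \(\mu\otimes\mu\otimes\mathrm{Leb}\). The integrand is \(\eta(x)\eta(y)e^{-s(c(x)+c(y))}\), and its absolute value integrates, by Tonelli, to exactly
\[
   \iint_{D^2}\frac{\abs{\eta(x)\eta(y)}}{c(x)+c(y)}\mu(\dd x)\mu(\dd y),
\]
which is finite by hypothesis. The joint integrand is measurable because \(\eta\) and \(c\) are measurable. Fubini then allows integrating in \((x,y)\) first for each fixed \(s\), and the inner double integral factorises as \(\bigl(\int_D \eta(x) e^{-sc(x)}\mu(\dd x)\bigr)^2\).

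The main obstacle is making this factorisation legitimate. We need \(g(s):=\int_D\abs{\eta(x)}e^{-sc(x)}\mu(\dd x)<\infty\) for almost every \(s\), without assuming \(\mu\) is \(\sigma\)-finite. Tonelli applied to the absolute integrand gives
\[
   \int_0^\infty g(s)^2\dd s=\iint_{D^2}\frac{\abs{\eta(x)\eta(y)}}{c(x)+c(y)}\mu(\dd x)\mu(\dd y)<\infty,
\]
so \(g(s)<\infty\) for a.e. \(s\), and the factorisation is valid for those \(s\).

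Tonelli for a possibly non-\(\sigma\)-finite \(\mu\) needs care. To avoid the issue, I would first reduce to a \(\sigma\)-finite situation. By the finiteness hypothesis and Tonelli in the \(y\)-variable, we may discard the set \(\{\eta=0\}\), since it contributes nothing to either integral. On \(\{\eta\neq0\}\), the hypothesis gives \(\int\abs{\eta(y)}/(c(x)+c(y))\,\mu(\dd y)<\infty\) for some \(x\), which forces \(\abs{\eta}\mu\) restricted to \(\{\eta\ne0\}\) to be \(\sigma\)-finite. Concretely, the set \(\{\abs{\eta}>1/k,\ c<k\}\) has finite \(\mu\)-measure. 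Then \(\mu\) itself is \(\sigma\)-finite on \(\{\eta\neq0\}\).

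If this reduction proves too fiddly, the alternative is to note that in the paper's applications \(\mu\) is either Lebesgue measure or counting measure on a countable set. Both are \(\sigma\)-finite, so Fubini--Tonelli applies directly. Finally, the right-hand side is an integral of a square, hence non-negative, which gives the claimed inequality.
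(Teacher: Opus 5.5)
Your proof is correct, and it is self-contained where the paper is not: the paper gives no argument for this lemma and simply defers to Zhuang and Hu's Lemma 2.1. The Laplace representation $1/(c(x)+c(y))=\int_0^\infty e^{-sc(x)}e^{-sc(y)}\dd s$ shows the kernel is a Gram kernel in $L^2(0,\infty)$. It also gives the explicit sum-of-squares identity
\[
\iint_{D^2}\frac{\eta(x)\eta(y)}{c(x)+c(y)}\mu(\dd x)\mu(\dd y)=\int_0^\infty\Bigl(\int_D\eta(x)e^{-sc(x)}\mu(\dd x)\Bigr)^2\dd s,
\]
which tells you more than a bare citation does.

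Your $\sigma$-finiteness reduction is sound. If $x_0$ has finite inner integral, then on $E_k=\{\abs{\eta}>1/k,\ c<k\}$ the integrand is at least $1/(k(c(x_0)+k))$. Hence $\mu(E_k)<\infty$, and $E_k\uparrow\{\eta\neq0\}$ because $c$ is finite-valued. If you read the double integral against a product measure rather than as an iterated integral, the same conclusion follows from $(\mu\otimes\mu)(E_k\times E_k)=\mu(E_k)^2<\infty$.

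There are two small bookkeeping points. First, finding such an $x_0$ with $\eta(x_0)\neq0$ uses $\mu(\{\eta\neq0\})>0$. You should split off the trivial case where this measure is zero, in which both integrals vanish. Second, derive $\int_0^\infty g(s)^2\dd s<\infty$ after the reduction, not before, since that Tonelli step is exactly where $\sigma$-finiteness is needed.

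Your fallback remark matches the paper's usage. In the proof of the corollary, the lemma is only applied to counting measure on a finite set $F$ and to Lebesgue measure on $D$, both $\sigma$-finite. So your reduction is needed only to make the lemma true at its stated generality of an arbitrary measure $\mu$.
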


\begin{proof}
See \citet[Lemma 2.1]{ZhuangHu2026}.
\end{proof}

\section{Main result}

\begin{theorem}\label{thm:main}
Let \(r>0\). The inequality
\[
   \E\abs{X+Y}^{r}\ge \E\abs{X-Y}^{r}
\]
holds for every random vector \((X,Y)\) satisfying either the discrete model \eqref{eq:discrete-model}-\eqref{eq:discrete-positive} or the density model \eqref{eq:density-positive}-\eqref{eq:density-model}, with \(\E\abs{X}^{r}<\infty\), if and only if \(0<r\le2\).
\end{theorem}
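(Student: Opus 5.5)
We treat the three regimes \(0<r<2\), \(r=2\) and \(r>2\) separately; the first two give sufficiency and the third gives necessity.

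\emph{Case \(0<r<2\).} We use Lemma~\ref{lem:chung} with \(z=x+y\) and \(z=x-y\). Since \(\cos(t(x-y))-\cos(t(x+y))=2\sin(tx)\sin(ty)\), formally
\[
   \E\abs{X+Y}^{r}-\E\abs{X-Y}^{r}
   =2C_r\int_{-\infty}^{\infty}\frac{\E[\sin(tX)\sin(tY)]}{\abs{t}^{r+1}}\dd t .
\]
For each fixed \(t\), the inner expectation is \(\sum_{i,j}u_iu_jp_{ij}\) with \(u_i=\sin(ta_i)\) in the discrete case. It equals \(\iint\delta(x)\delta(y)f(x,y)\dd x\dd y\) with \(\delta(x)=\sin(tx)\) in the density case. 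Because \(\abs{\delta}\le1\) and \(f\) is a probability density, the absolute integrability hypothesis holds, so \eqref{eq:density-positive} gives nonnegativity. In the discrete case with \(I=\N\), we apply \eqref{eq:discrete-positive} on finite sets \(F_N=\{1,\dots,N\}\) and pass to the limit by dominated convergence, since \(\sum\abs{p_{ij}}=1\) and \(\abs{u_i}\le1\). Integrating a nonnegative function against \(\abs{t}^{-r-1}\dd t\) then gives the inequality.

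The main technical step is justifying the Fubini/Tonelli interchange for the difference. We apply Tonelli separately to each term \(\E\int (1-\cos(t(X\pm Y)))\abs{t}^{-r-1}\dd t = C_r^{-1}\E\abs{X\pm Y}^r\). Each is finite because \(\E\abs{X\pm Y}^r\le c_r(\E\abs{X}^r+\E\abs{Y}^r)\), and \(Y\) has the same marginal law as \(X\) by symmetry of \(p\) or \(f\). Each term is a finite integral of a nonnegative integrand, so subtracting them is legitimate and yields the displayed identity, now rigorous.

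\emph{Case \(r=2\).} We compute \(\E(X+Y)^2-\E(X-Y)^2=4\E[XY]\), which is finite since \(\E X^2<\infty\). Then \(\E[XY]=\sum_{i,j}a_ia_jp_{ij}\ge0\), by truncation to \(F_N\) plus dominated convergence, using \(\abs{a_ia_j}\le(a_i^2+a_j^2)/2\). In the density case we take \(\delta(x)=x\), which is admissible because \(\E\abs{XY}<\infty\). Alternatively, \(r=2\) follows from \(r<2\) by letting \(r\uparrow2\) with Fatou/dominated convergence, but the direct computation is simpler.

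\emph{Necessity for \(r>2\).} We need one admissible counterexample, and I would take the simplest discrete one. Let the support be \(\{-1,0,1\}\) (or \(\{1,-1\}\) together with a large point), and seek a PSD symmetric probability matrix for which \(\E\abs{X+Y}^r<\E\abs{X-Y}^r\). The identity matrix scaled by \(1/n\) (\(X=Y\)) gives the wrong direction, so I would use mass on the pairs \((1,0),(0,1)\) and \((1,1)\). Take support \(\{0,1\}\) with \(p_{00}=\alpha\), \(p_{11}=\gamma\), \(p_{01}=p_{10}=\beta\), subject to the PSD condition \(\alpha\gamma\ge\beta^2\). Then \(\E\abs{X+Y}^r=2\beta+2^r\gamma\) and \(\E\abs{X-Y}^r=2\beta\), which is useless, so symmetric-sign points are needed. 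Take support \(\{1,-1\}\) with \(p_{11}=p_{-1-1}=a\) and \(p_{1,-1}=p_{-1,1}=b\), which is PSD iff \(a\ge b\). This gives \(2a\cdot2^r\) versus \(2b\cdot2^r\), again no good. The hardest step is therefore this construction, and I would try three points \(\{-1,1,s\}\) with a small parameter. The guiding idea is that for \(r>2\) the function \(\abs{z}^r\) is not conditionally negative definite. One then picks points \(a_i\) and weights for which \(\sum w_iw_j\abs{a_i-a_j}^r\) exceeds \(\sum w_iw_j\abs{a_i+a_j}^r\) in a rank-one or near-rank-one structure \(p_{ij}=w_iw_j+\varepsilon\,\mathrm{diag}\). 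Concretely, \(p_{ij}=w_iw_j\) (that is, \(X,Y\) i.i.d., which is PSD) with support \(\{1,-1,0\}\) or \(\{-1,1,2\}\) and tuned weights should make \(\E\abs{X+Y}^r<\E\abs{X-Y}^r\) for \(r>2\). One searches for the weights via a small-perturbation expansion around \(X\) symmetric, where the two sides agree, and checks that the fourth-order term has the wrong sign once \(r>2\). For the density model, we smooth this discrete example by convolving with a narrow uniform kernel in product form, which preserves the i.i.d. (hence PSD) structure and, by continuity, the strict inequality.
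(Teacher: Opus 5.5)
Your sufficiency half is sound. Applying Tonelli separately to \(\E\abs{X+Y}^r\) and \(\E\abs{X-Y}^r\), then subtracting two finite integrals, is a valid and slightly shorter substitute for the paper's truncation to \(T_n\) followed by dominated convergence. Your \(r=2\) argument is the paper's.

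The genuine gap is the \emph{only if} direction. For \(r>2\) you never exhibit and verify an admissible law with \(\E\abs{X+Y}^r<\E\abs{X-Y}^r\); the candidates you list are either provably useless or left unchecked. The support \(\{-1,0,1\}\) fails for every \(r\). Write \(p_{st}=\mathbb P(X=s,Y=t)\); all pairs involving \(0\) contribute zero, so
\[
   \E\abs{X+Y}^r-\E\abs{X-Y}^r=2^r\bigl(p_{1,1}+p_{-1,-1}-2p_{1,-1}\bigr)\ge0,
\]
by the positivity condition with \(u_{-1}=-1\), \(u_0=0\), \(u_1=1\). The perturbation heuristic is also mis-stated. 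For i.i.d.\ \(X,Y\) with law \(\rho\), the kernel \(\abs{x+y}^r-\abs{x-y}^r\) is odd in each variable. Hence the difference equals \(-2\iint\abs{x-y}^r\rho_a(\dd x)\rho_a(\dd y)\), where \(\rho_a(B)=\tfrac12(\rho(B)-\rho(-B))\). It is exactly quadratic in the antisymmetric part, with no fourth-order term. The real task is to find an antisymmetric \(\rho_a\) making that double integral positive. Saying tuned weights \emph{should} work is not a proof, and your smoothed density counterexample, which otherwise matches the paper's, inherits this gap.

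What is needed is one explicit law with an estimate valid for all \(r>2\). The paper takes \(X,Y\) i.i.d.\ with \(\mathbb P(X=A)=p\) and \(\mathbb P(X=-1)=q\), where \(A=2^{2r/(r-2)}\) and \(p=r/(2^rA)\). Jensen for the convex function \(t^{r-1}\) gives \((A+1)^r-(A-1)^r\ge 2rA^{r-1}\), which yields \(\Delta\le 2^r(1-r^2)<0\). Your \(\{-1,1,2\}\) idea can in fact be completed. For mass \(q\) at \(-1\) and \(p\) at \(2\),
\[
   \Delta=4^r p^2-2(3^r-1)pq+2^r q^2 .
\]
This is a binary quadratic form whose discriminant \((3^r-1)^2-8^r\) vanishes at \(r=2\). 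It is strictly positive for \(r>2\), because \(3^r>2^{3r/2}\) and \(\ln 3>\tfrac32\ln 2\), so \(3^r-1-2^{3r/2}\) is increasing. Hence a suitable ratio \(q/p>0\) gives \(\Delta<0\). But this computation, or the paper's, has to be carried out before the necessity half of the theorem is established.
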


\begin{proof}
We prove the two directions in three cases.

First assume that \(0<r<2\). Let \((X,Y)\) satisfy either the discrete model or the density model, and assume that \(\E\abs{X}^{r}<\infty\). In both models the joint distribution is symmetric in \((x,y)\). Therefore \(X\) and \(Y\) have the same marginal distribution, and \(\E\abs{Y}^{r}=\E\abs{X}^{r}<\infty\).

For all real \(x,y\),
\[
   \abs{x+y}^{r}\le A_r(\abs{x}^{r}+\abs{y}^{r}),
   \qquad
   \abs{x-y}^{r}\le A_r(\abs{x}^{r}+\abs{y}^{r}),
\]
where
\[
   A_r=\begin{cases}
      1, & 0<r\le1,\\
      2^{r-1}, & 1<r<2.
   \end{cases}
\]
This is because the function \(t\mapsto t^r\) is subadditive on \([0,\infty)\) when \(0<r\le1\), while convexity gives \((a+b)^r\le2^{r-1}(a^r+b^r)\) for \(a,b\ge0\) when \(1<r<2\). Hence \(\E\abs{X+Y}^{r}\) and \(\E\abs{X-Y}^{r}\) are finite.

For \(n\in\N^+\), let \(T_n=\{t\in\R:n^{-1}<\abs{t}<n\}\) and define
\[
   \Phi_n(z)=C_r\int_{T_n}\frac{1-\cos(tz)}{\abs{t}^{r+1}}\dd t,
   \qquad z\in\R.
\]
The integrand is non-negative and \(T_n\uparrow\R\setminus\{0\}\). By the Monotone Convergence Theorem applied to the integral in Lemma \ref{lem:chung},
\[
   0\le \Phi_n(z)\le \abs{z}^{r},
   \qquad
   \Phi_n(z)\to \abs{z}^{r}
   \quad\text{for every }z\in\R.
\]
Consequently,
\[
   \abs{\Phi_n(X+Y)-\Phi_n(X-Y)}
   \le \abs{X+Y}^{r}+\abs{X-Y}^{r},
\]
and the right-hand side is integrable. By the Dominated Convergence Theorem,
\begin{equation}\label{eq:dct-limit}
   \E\abs{X+Y}^{r}-\E\abs{X-Y}^{r}
   =\lim_{n\to\infty}\E\{\Phi_n(X+Y)-\Phi_n(X-Y)\}.
\end{equation}
It remains to prove that every term on the right is non-negative.

Fix \(n\in\N^+\). Since
\[
   \abs{\cos(t(X-Y))-\cos(t(X+Y))}\le2
   \quad\text{and}\quad
   \int_{T_n}\frac{\dd t}{\abs{t}^{r+1}}<\infty,
\]
we have
\[
   \int_{T_n}\frac{\E\abs{\cos(t(X-Y))-\cos(t(X+Y))}}{\abs{t}^{r+1}}\dd t
   \le 2\int_{T_n}\frac{\dd t}{\abs{t}^{r+1}}<\infty.
\]
Fubini's theorem applies to the truncated integral, so we can exchange expectation and integration, and
\[
   \E\{\Phi_n(X+Y)-\Phi_n(X-Y)\}
   =C_r\int_{T_n}\frac{\E\{\cos(t(X-Y))-\cos(t(X+Y))\}}{\abs{t}^{r+1}}\dd t.
\]
For fixed \(t\in\R\), the numerator is absolutely integrable, because its absolute value is at most \(2\). Also,
\[
   \cos(t(x-y))-\cos(t(x+y))=2\sin(tx)\sin(ty).
\]
In the discrete model this gives
\[
   \E\{\cos(t(X-Y))-\cos(t(X+Y))\}
   =2\sum_{i\in I}\sum_{j\in I}\sin(ta_i)\sin(ta_j)p_{ij}.
\]
The double series is absolutely convergent because \(\abs{\sin(ta_i)\sin(ta_j)}\le1\) and \(\sum_{i,j}p_{ij}=1\). To see that its value is non-negative when \(I\) is infinite, take finite sets \(F_m\uparrow I\), apply \eqref{eq:discrete-positive} on each \(F_m\) with \(u_i=\sin(ta_i)\), and pass to the limit using absolute convergence. The finite case is the same without the limiting step.

In the density model,
\[
   \E\{\cos(t(X-Y))-\cos(t(X+Y))\}
   =2\iint_{D^2}\sin(tx)\sin(ty)f(x,y)\dd x\dd y.
\]
Here the integrability condition preceding \eqref{eq:density-positive} holds with \(\delta(x)=\sin(tx)\), because \(\abs{\sin(tx)}\le1\) and \eqref{eq:density-normalization} holds. Therefore \eqref{eq:density-positive} implies that the last integral is non-negative.

Thus \(\E\{\Phi_n(X+Y)-\Phi_n(X-Y)\}\ge0\) for every \(n\in\N^+\). Passing to the limit in \eqref{eq:dct-limit} proves the inequality for \(0<r<2\).

Next assume that \(r=2\). Let \((X,Y)\) satisfy either the discrete model or the density model, and assume that \(\E X^2<\infty\). Since \(X\) and \(Y\) have the same marginal distribution, \(\E Y^2<\infty\). Moreover,
\[
   \E\abs{XY}\le \frac12\E X^2+\frac12\E Y^2<\infty.
\]
In the discrete model,
\[
   \E[XY]=\sum_{i\in I}\sum_{j\in I}a_i a_j p_{ij}\ge0.
\]
For \(I\) infinite, this is justified by applying \eqref{eq:discrete-positive} to finite sets \(F_m\uparrow I\) with \(u_i=a_i\), and then passing to the limit by the absolute convergence implied by \(\E\abs{XY}<\infty\). In the density model,
\[
   \E[XY]=\iint_{D^2}xy f(x,y)\dd x\dd y\ge0
\]
by \eqref{eq:density-positive} with \(\delta(x)=x\). Since \(\abs{X+Y}^{2}-\abs{X-Y}^{2}=4XY\), taking expectations gives
\[
   \E\abs{X+Y}^{2}-\E\abs{X-Y}^{2}=4\E[XY]\ge0.
\]
Therefore the inequality also holds for \(r=2\).

It remains to show that the statement fails for every \(r>2\). Fix \(r>2\), and define
\[
   A=2^{\frac{2r}{r-2}},
   \qquad
   p=\frac{r}{2^r A},
   \qquad
   q=1-p.
\]
Since \(2^r>2r\) for \(r>2\) and \(A>1\), we have \(0<p<1/2\), and therefore \(q>1/2\). Let \(X\) and \(Y\) be independent and identically distributed random variables with
\[
   \mathbb P(X=A)=p,
   \qquad
   \mathbb P(X=-1)=q.
\]
This distribution belongs to the discrete model. Ordering the support as \(A,-1\), its joint probability matrix is
\[
   (p_{ij})=\begin{pmatrix}
      p^2 & pq\\
      pq & q^2
   \end{pmatrix},
\]
and it is non-negative in the quadratic-form sense because, for every \((u_1,u_2)\in\R^2\),
\[
   \sum_{i=1}^2\sum_{j=1}^2u_i u_j p_{ij}=(pu_1+qu_2)^2\ge0.
\]
Also \(\E\abs{X}^{r}<\infty\), since the distribution has two points.

Write
\[
   \Delta=\E\abs{X+Y}^{r}-\E\abs{X-Y}^{r}.
\]
The three possible values of \(\abs{X+Y}\) are \(2A\), \(2\), and \(A-1\), with probabilities \(p^2\), \(q^2\), and \(2pq\), respectively. The random variable \(\abs{X-Y}\) is equal to \(A+1\) with probability \(2pq\) and is equal to \(0\) otherwise. Hence
\[
   \Delta
   =p^2(2A)^r+q^2 2^r+2pq(A-1)^r-2pq(A+1)^r.
\]
We now prove directly that \(\Delta<0\). Let \(g(t)=t^{r-1}\) for \(t>0\). Since \(r>2\), the function \(g\) is convex on \((0,\infty)\). Jensen's inequality, applied to the uniform probability measure on \([A-1,A+1]\), gives
\[
   \frac12\int_{A-1}^{A+1}g(t)\dd t
   \ge g\left(\frac12\int_{A-1}^{A+1}t\dd t\right)
   =g(A)=A^{r-1}.
\]
It follows that
\[
   (A+1)^r-(A-1)^r
   =r\int_{A-1}^{A+1}t^{r-1}\dd t
   \ge 2rA^{r-1}.
\]
Using this estimate and \(q>1/2\), we obtain
\begin{align*}
   \Delta
   &=2^r p^2A^r+2^rq^2-2pq\{(A+1)^r-(A-1)^r\} \\
   &\le 2^r p^2A^r+2^rq^2-4rpqA^{r-1} \\
   &=2^rq^2+\frac{r^2}{2^r}A^{r-2}(1-4q) \\
   &<2^r-\frac{r^2}{2^r}A^{r-2} \\
   &=2^r-r^2 2^r \\
   &=2^r(1-r^2) \\
   &<0.
\end{align*}
Therefore \(\E\abs{X+Y}^{r}<\E\abs{X-Y}^{r}\). Thus, when \(r>2\), the inequality does not hold for every random vector in the discrete model.

The same failure also occurs in the density model. Let \(Z_1\) and \(Z_2\) be independent copies of the two-point random variable above. For \(0<\varepsilon<1/2\), let \(U_1\) and \(U_2\) be independent uniform random variables on \([-1,1]\), independent of \(Z_1\) and \(Z_2\), and define
\[
   X_\varepsilon=Z_1+\varepsilon U_1,
   \qquad
   Y_\varepsilon=Z_2+\varepsilon U_2.
\]
Then \(X_\varepsilon\) and \(Y_\varepsilon\) are independent with common density
\[
   h_\varepsilon(x)=\frac{q}{2\varepsilon}\mathbf 1_{[-1-\varepsilon,-1+\varepsilon]}(x)
   +\frac{p}{2\varepsilon}\mathbf 1_{[A-\varepsilon,A+\varepsilon]}(x)
\]
on
\[
   D_\varepsilon=[-1-\varepsilon,-1+\varepsilon]\cup[A-\varepsilon,A+\varepsilon].
\]
Hence \((X_\varepsilon,Y_\varepsilon)\) has joint density
\[
   f_\varepsilon(x,y)=h_\varepsilon(x)h_\varepsilon(y),
   \qquad (x,y)\in D_\varepsilon^2.
\]
This density satisfies \eqref{eq:density-positive}, since
\[
   \iint_{D_\varepsilon^2}\delta(x)\delta(y)f_\varepsilon(x,y)\dd x\dd y
   =\left(\int_{D_\varepsilon}\delta(x)h_\varepsilon(x)\dd x\right)^2\ge0
\]
whenever the left-hand side is absolutely integrable. Also \((X_\varepsilon,Y_\varepsilon)\to(Z_1,Z_2)\) almost surely as \(\varepsilon\downarrow0\). Since the variables are uniformly bounded for sufficiently small \(\varepsilon\), and since
\[
   (x,y)\mapsto \abs{x+y}^{r}-\abs{x-y}^{r}
\]
is continuous, the Dominated Convergence Theorem gives
\[
   \E\abs{X_\varepsilon+Y_\varepsilon}^{r}-\E\abs{X_\varepsilon-Y_\varepsilon}^{r}\to \Delta<0.
\]
Hence the inequality fails for all sufficiently small \(\varepsilon\) in the density model as well. This proves the converse direction, and hence the theorem.
\end{proof}

\begin{corollary}\label{cor:cauchy}
Let \((X,Y)\) have either the joint probability mass function \eqref{eq:cauchy-discrete} or the joint density function \eqref{eq:cauchy-density}. Assume that \(0<r<2\) and \(\E\abs{X}^{r}<\infty\). Then
\[
   \E\abs{X+Y}^{r}\ge \E\abs{X-Y}^{r}.
\]
Consequently, the open question posed by \citet[Remark 3.1]{ZhuangHu2026} is proved.
\end{corollary}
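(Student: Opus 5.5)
The corollary will follow from Theorem \ref{thm:main}: once we show that the Cauchy-type mass function \eqref{eq:cauchy-discrete} and density \eqref{eq:cauchy-density} are non-negative in the quadratic-form sense of \eqref{eq:discrete-positive} and \eqref{eq:density-positive}, the case \(0<r<2\) of the theorem gives the inequality. Both displays are symmetric in the two arguments and, by assumption, define valid joint distributions. So the symmetry and normalization requirements of the models are met, and only positivity needs checking. In each case the check is an application of Lemma \ref{lem:cauchy-positive} with a suitable measure \(\mu\) and weight \(\eta\).

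For the density case, take \(D\) and \(c\) as in \eqref{eq:cauchy-density}, let \(\mu\) be Lebesgue measure restricted to \(D\), and, for a measurable \(\delta\), put \(\eta(x)=\delta(x)d(x)\). Then
\[
   \abs{\delta(x)\delta(y)}f(x,y)=\frac{\abs{\eta(x)\eta(y)}}{c(x)+c(y)},
\]
because \(d>0\). Hence the integrability hypothesis preceding \eqref{eq:density-positive} is exactly the hypothesis of Lemma \ref{lem:cauchy-positive}, and the lemma's conclusion is exactly \eqref{eq:density-positive}. I take it that \(c\) and \(d\) are measurable, as is implicit in \(f\) being a density.

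For the discrete case, fix a finite \(F\subseteq I\) and reals \((u_i)_{i\in F}\). Let \(D=\{a_i:i\in F\}\) (the \(a_i\) are distinct), let \(\mu\) be counting measure on \(D\), and define \(c(a_i)=c_i\) and \(\eta(a_i)=u_id_i\). All sums are finite, so the integrability hypothesis holds trivially. Lemma \ref{lem:cauchy-positive} then gives \(\sum_{i,j\in F}u_iu_jd_id_j/(c_i+c_j)\ge0\), which is \eqref{eq:discrete-positive}. If the \(a_i\) in \eqref{eq:cauchy-discrete} were allowed to repeat, the model would need relabelling; it is easier to note that the proof of Theorem \ref{thm:main} only uses positivity with \(u_i=\sin(ta_i)\), and that positivity holds on the index set directly. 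In that case I would apply the lemma on \(F\) itself with counting measure, without passing through the points \(a_i\).

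The only real obstacle is bookkeeping: matching the weights so that the hypotheses of the lemma and of the model coincide, and handling possible repeated support points in the discrete case. Once this is done, Theorem \ref{thm:main} with \(0<r<2\) and \(\E\abs{X}^r<\infty\) yields \(\E\abs{X+Y}^r\ge\E\abs{X-Y}^r\), which answers the question in Remark 3.1 of \citet{ZhuangHu2026}.
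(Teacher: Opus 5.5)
Your proposal is correct and follows essentially the same route as the paper: verify quadratic-form positivity via Lemma \ref{lem:cauchy-positive} with $\eta=\delta d$ under Lebesgue measure in the density case and $\eta_i=u_id_i$ under counting measure in the discrete case, then invoke the $0<r<2$ case of Theorem \ref{thm:main}. The only cosmetic difference is that you first transfer to counting measure on $\{a_i:i\in F\}$. The paper applies the lemma to counting measure on $F$ directly, which is your fallback, and this also makes the distinctness of the $a_i$ irrelevant for the positivity check.
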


\begin{proof}
First consider the probability mass function \eqref{eq:cauchy-discrete}. Define
\[
   p_{ij}=\frac{d_i d_j}{c_i+c_j},
   \qquad i,j\in I.
\]
The array \((p_{ij})\) is non-negative and symmetric, and it is normalized because \eqref{eq:cauchy-discrete} is assumed to be a probability mass function. To verify \eqref{eq:discrete-positive}, fix a finite set \(F\subseteq I\) and real numbers \((u_i)_{i\in F}\). Put \(\eta_i=u_i d_i\). By Lemma \ref{lem:cauchy-positive}, applied to the finite counting measure on \(F\),
\[
   \sum_{i\in F}\sum_{j\in F}u_i u_j p_{ij}
   =\sum_{i\in F}\sum_{j\in F}\frac{\eta_i\eta_j}{c_i+c_j}\ge0.
\]
Thus \eqref{eq:cauchy-discrete} satisfies the discrete model.

Next consider the density function \eqref{eq:cauchy-density}. It is non-negative, symmetric, and normalized by assumption. Let \(\delta:D\to\R\) be measurable and suppose that
\[
   \iint_{D^2}\abs{\delta(x)\delta(y)}\frac{d(x)d(y)}{c(x)+c(y)}\dd x\dd y<\infty.
\]
Set \(\eta(x)=\delta(x)d(x)\). Then Lemma \ref{lem:cauchy-positive}, applied with Lebesgue measure, gives
\[
   \iint_{D^2}\delta(x)\delta(y)\frac{d(x)d(y)}{c(x)+c(y)}\dd x\dd y
   =\iint_{D^2}\frac{\eta(x)\eta(y)}{c(x)+c(y)}\dd x\dd y\ge0.
\]
Thus \eqref{eq:cauchy-density} satisfies the density model. The result now follows from Theorem \ref{thm:main}.
\end{proof}

\begin{remark}
The case \(r<0\) is outside the scope of absolute moment inequalities. If negative powers are nevertheless considered, with the convention that \(0^r=+\infty\), the analogous inequality fails for every \(r<0\). Let \(X\) and \(Y\) be independent and uniformly distributed on \([1,2]\). This is the density model with \(D=[1,2]\) and \(f(x,y)=1\). Since \(X+Y\in[2,4]\),
\[
   \E\abs{X+Y}^{r}\le 2^r<1.
\]
Also \(0\le \abs{X-Y}\le1\) almost surely. If \(-1<r<0\), then \(\abs{X-Y}^r\ge1\) almost surely and \(\E\abs{X-Y}^r<\infty\), so
\[
   \E\abs{X+Y}^{r}<\E\abs{X-Y}^{r}.
\]
It remains only to explain the case \(r\le -1\). Since \(X\) and \(Y\) are independent and uniform on \([1,2]\), the random variable \(X-Y\) has density
\[
   h(w)=1-\abs{w},
   \qquad -1<w<1.
\]
Therefore \(U=\abs{X-Y}\) has density \(2(1-u)\) on \((0,1)\). Hence
\[
   \E\abs{X-Y}^{r}=2\int_0^1 u^r(1-u)\dd u.
\]
For \(0<u<1/2\), we have \(1-u>1/2\). Consequently,
\[
   2\int_0^1 u^r(1-u)\dd u
   \ge \int_0^{1/2}u^r\dd u=+\infty,
   \qquad r\le -1.
\]
Thus \(\E\abs{X-Y}^{r}=+\infty\), whereas \(\E\abs{X+Y}^{r}<\infty\). The inequality fails for every \(r<0\).
\end{remark}

\end{document}